\newcommand{\au}{$Aut(\Sigma,\partial \Sigma) \,\,$}
\newcommand{\aut}{$Aut(\Sigma,\partial \Sigma)$}   
\newcommand{\rv}{right-veering }
\def\R{\mathbb{R}}
\def\Z{\mathbb{Z}}
\newtheorem{theorem}{Theorem}[section]
\newtheorem{lemma}[theorem]{Lemma}
\newtheorem{corollary}[theorem]{Corollary}
\newtheorem{definition}[theorem]{Definition}
\theoremstyle{remark}
\newtheorem{remark}[theorem]{\rm\bfseries{Remark}}
\def\dfn#1{{\em #1}}
\title[On the Classification of Planar Contact Structures]
{On the Classification of Planar Contact Structures}
\author{M. Firat Arikan}
\address{Department of Mathematics, University of Rochester, Rochester NY 14627, USA}
\email{arikan@math.rochester.edu}
\author{Selahi Durusoy}
\address{Department of Mathematics, Grand Valley State University, Grand Rapids MI 49401, USA}
\email{durusoyd@gvsu.edu}
\subjclass{58D27,  58A05, 57R65}
\keywords{Contact structure, open book, four-punctured sphere, tight, overtwisted, right-veering}
\begin{document}
\begin{abstract}

In this paper, we focus on contact structures supported by planar
open book decompositions. We study right-veering diffeomorphisms
to keep track of overtwistedness property of contact  structures under
some monodromy changes. As an application we give infinitely many examples of overtwisted
contact structures supported by open books whose pages
are the four-punctured sphere, and also we prove that a certain
family is holomorphically fillable using lantern relation.
\end{abstract}

\maketitle

%
%
\section{Introduction} \label{sec:intro}

Let $(M,\xi)$ be a closed oriented 3-manifold with the contact
structure $\xi$, and let $(S,h)$ be an open book (decomposition)
of $M$ which is compatible with $\xi$. (We refer the
reader to \cite{Et3, Ge} for contact geometry, and to \cite{Et2, Gd} for open books and compatibility). Based on the correspondence given in \cite{Gi}, two topological invariants were defined in \cite{EO}:
\[
sg(\xi)=\min \{ \, \,g(S) \, \, \vert \, \,(S,h) \text{ an open
book supporting } \xi\},
\]
called the \dfn{support genus} of $\xi$, and
\[
bn(\xi)=\min \{ \, \, |\partial S|  \, \,\vert  \, \, (S,h) \text{
an open book supporting }  \xi \text{ and } g(S)=sg(\xi)\},
\]
called the \dfn{binding number} of $\xi$. It is proved in \cite{Et1}
that if $(M,\xi)$ is overtwisted, then $sg(\xi)=0$, i.e., $\xi$
is supported by a planar open book. The algorithm
given in \cite{Ar1} finds a reasonable upper bound for $sg(\xi)$
using the given contact surgery diagram of $\xi$. On the other hand,
for $sg(\xi)=0$ and $bn(\xi)\leq2$, the complete list of all such
planar contact structures (up to isotopy) is given in \cite{EO}. The case
when $sg(\xi)=0$ and $bn(\xi)=3$ is classified in \cite{Ar2}. In the
last two mentioned papers, it was also shown which structures are
tight and which ones are overtwisted. As an application of the techniques
which we will develop, we will partially consider the case where
$sg(\xi)=0$ and $bn (\xi)\leq4$ at the end of this paper. We refer the reader \cite{L} for tight contact structures supported by four-punctured sphere.

The structure of this paper is as follows: In Section \ref{sec:intro},
we state the theorems that we will prove later in the paper.
After the preliminaries (Section \ref{sec:preliminaries}), right-veering
diffeomorphisms and overtwisted planar contact structures are studied in
Section \ref{sec:ot} where we also give alternative proofs of some results
recently proved in \cite{Y} (see Lemma \ref{lem:OT+Left_twist}, Remark
\ref{rem:Yilmaz}, Lemma \ref{lem:Conjugate_of_RV_is_RV}). In Section \ref{sec:4-Holed},
we focus on the four-punctured sphere and prove our main results.

Let $D_{\gamma}$ denote the right Dehn twist
along the simple closed curve $\gamma$. Most of the time we'll
write $\gamma$ instead $D_\gamma$ for simplicity. For any
bordered surface $S$, let $Aut(S,\partial S)$ be the group of
isotopy classes of orientation preserving diffeomorphisms of $S$
which restrict to identity on $\partial S$. In $Aut(S,\partial S)$, we
will multiply a new element from the right of the previously given
word although we compose the corresponding difeomorphisms of $S$
from left.

For a given fixed open book $(S,h)$ of a $3$-manifold $M$, by \cite{Gi}, $(S,h)$
determines a unique contact manifold $(M_{(S,h)},\xi_{(S,h)})$ up to
contactomorphism. We will shorten the notation as $(M_h,\xi_h)$ if
the surface $S$ is clear from the content.

Let $\Sigma$ be the four$-$punctured sphere obtained by deleting the
interiors of four disks from the $2-$sphere $S^2$ (see Figure
\ref{4-holed_1}). Let $C_1, C_2, C_3, C_4$ be the boundary
components of $\Sigma$, and let $a, b, c, d$ denote the simple
closed curves parallel to the boundary components $C_1, C_2, C_3,
C_4$, respectively. Also consider the simple closed curves $e, f, g,
h$ in $\Sigma$ given as in Figure \ref{4-holed_1}.

\begin{figure}[ht]
   \includegraphics{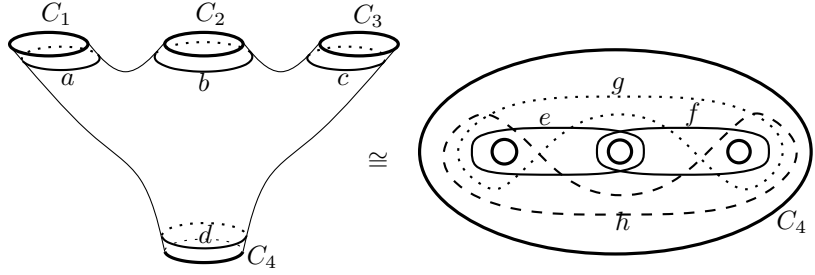}
   \caption{Four$-$punctured sphere $\Sigma$, and the simple closed curves.}
  \label{4-holed_1}
\end{figure}

Let $\phi \in$ \au be any element. In Section \ref{sec:4-Holed}, it
will be clear that we can write
$$\phi=a^{r_1}b^{r_2}c^{r_3}d^{r_4}e^{m_1}f^{n_1}\cdots
e^{m_s}f^{n_s}$$ for some integers $m_i$'s and $n_i$'s (see Lemma
\ref{lem:reducing_momodromy}). Our main results are the following:

\begin{theorem} \label{thm:holofillable}
The contact manifold $(M_\phi, \xi_{\phi})$
is holomorphically fillable in each of the following cases:
\begin{enumerate}
\item[(H1)] $s=1, max\{m_1,n_1\} \ge 0, min\{r_k\} \ge max \{ -m_1,-n_1,0\}$,
\item[(H2)] $s=1, m_1<0, n_1<0, max\{m_1,n_1\}=-1, min\{r_k\} \ge -m_1-n_1-1$,
\item[(H3)] $s=1, m_1<0, n_1<0, max\{m_1,n_1\}<-1, min\{r_k\} \ge -m_1-n_1-2$,
\item[(H4)] $s>1, min\{r_k\} \ge \sum_{i=1}^s max\{-m_i,0\} + \sum_{j=1}^s max\{-n_j,0\}$.
\end{enumerate}
\end{theorem}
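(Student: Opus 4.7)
The plan is to reduce each case to producing a positive Dehn twist factorization of $\phi$ in $Aut(\Sigma, \partial \Sigma)$, from which holomorphic (in fact Stein) fillability of $(M_\phi, \xi_\phi)$ follows by the Loi--Piergallini / Akbulut--Ozbagci correspondence between positive open books and allowable Lefschetz fibrations over the disk. The central tool is the lantern relation on $\Sigma$,
\[
e f g = a b c d,
\]
together with the fact that the boundary-parallel twists $a, b, c, d$ are central in $Aut(\Sigma, \partial \Sigma)$. Inverting and rearranging this relation gives two families of rewriting moves. A \emph{single move} replaces one negative twist by a positive product at the cost of one central factor $(abcd)^{-1}$; for instance $e^{-1} = fg(abcd)^{-1}$ and $f^{-1} = ge(abcd)^{-1}$. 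A \emph{paired move} handles an adjacent pair $e^{-1} f^{-1}$ via an identity of the form
\[
e^{-1} f^{-1} = h \, (abcd)^{-1},
\]
where $h$ is the fourth curve in Figure \ref{4-holed_1} (arising as the image $T_f(g)$); this consumes only one central factor, giving a discount of one compared with applying two single moves to the same pair.

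With these rewriting rules, each case reduces to bookkeeping: count the number of $(abcd)^{-1}$ factors produced and verify that $\min\{r_k\}$ is large enough to absorb them into the positive prefix $a^{r_1} b^{r_2} c^{r_3} d^{r_4}$. In (H1), at most one of $m_1, n_1$ is negative, so a single move is applied $\max\{-m_1,-n_1,0\}$ times, matching the hypothesis exactly. In (H2), where $\max\{m_1, n_1\} = -1$, I would split the negative block as $(e^{-1} f^{-1}) \cdot (\text{remaining})$ (up to swapping the roles of $e$ and $f$), apply one paired move, and clean up with single moves on the residual negative powers; the one-unit discount from the paired move matches the bound $-m_1 - n_1 - 1$. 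In (H3), where $|m_1|, |n_1| \ge 2$, there is enough room to apply the paired move twice, saving two units and arriving at $-m_1 - n_1 - 2$. In (H4), for $s>1$, negative powers occurring in different blocks cannot be paired across blocks (the intervening factors separate them), so each is handled via a single move individually, yielding the additive bound $\sum_i \max\{-m_i,0\}+\sum_j \max\{-n_j,0\}$.

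The main obstacle I anticipate is Case (H3): after one paired move, the auxiliary twist $h$ sits between the remaining negative powers of $e$ and $f$, and $h$ does not commute freely with either $e$ or $f$. To legitimize a second paired move I would either derive a companion identity involving $h$ (obtained by combining cyclic permutations of the lantern with the geometric picture of $g$ and $h$ in Figure \ref{4-holed_1}) or conjugate to transport the remaining negative pair into the standard $e^{-1} f^{-1}$ position where the paired identity applies directly. Once the two paired moves are justified, collecting and absorbing the central factors into the positive prefix completes every case.
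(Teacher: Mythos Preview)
Your proposal is correct and follows essentially the same route as the paper: rewrite negative twists via the lantern relations (single moves for (H1) and (H4), one paired move $e^{-1}f^{-1}=(abcd)^{-1}h$ for (H2)), then invoke Theorem~\ref{Holofill=PosDehnTwist}. For (H3) the paper resolves exactly the obstacle you flag by the conjugation strategy you suggest: it rewrites the leading $e^{-1}$ as $(abcd)^{-1}fg$ and then applies Theorem~\ref{thm:Conjugates_Are_Contactomorphic} to cancel the new front $f$ against a trailing $f^{-1}$, which together with one paired move yields the saving of two.
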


For the other results, we focus only on the elements of the form $\phi=a^{r_1}b^{r_2}c^{r_3}d^{r_4}e^{m_1}f^{n}
e^{m_2}$ or $\phi=a^{r_1}b^{r_2}c^{r_3}d^{r_4}f^{n_1}e^{m}f^{n_2}$. Note that it is enough to study only one of these forms because of the symmetry between $e$ and $f$ given by rotation, so we will consider only the first one.

\begin{theorem} \label{thm:overtwisted}
The contact structure $\xi_{\phi}$ is overtwisted in the following
cases:
\begin{enumerate}
\item[(OT1)] $r_k <0$ for some $k$,
\item[(OT2)] $r_k=0$ for some $k$ and $min \{ m,n \}<0$,
\item[(OT3)] $min\{r_k\}=1$, \{$r_2=1$ or $r_4=1$\}, $min \{ m,n \}<0$ and $mn \geq 2$,
\item[(OT4)] $min\{r_k\}=1$, \{$r_1=1$ or $r_3=1$\}, $min \{ m,n \}<0$ and $mn \geq 2$,
\end{enumerate}
where $\phi=a^{r_1}b^{r_2}c^{r_3}d^{r_4}e^{m_1}f^{n}
e^{m_2} \in$ \au and $m=m_1+m_2$.
\end{theorem}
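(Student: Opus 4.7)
The plan is to invoke the standard characterization of overtwistedness in terms of the monodromy: a contact structure supported by an open book $(\Sigma,\phi)$ is overtwisted precisely when $\phi\notin$ \ve. Accordingly, for each of the four cases (OT1)--(OT4) I would exhibit a properly embedded arc $\gamma\subset\Sigma$ with an endpoint $p$ on some boundary component and show that $\phi(\gamma)$ lies strictly to the left of $\gamma$ at $p$.

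Two structural facts organize the whole argument. First, the boundary-parallel curves $a,b,c,d$ are disjoint from both $e$ and $f$, so the twists $D_a,D_b,D_c,D_d$ commute with every factor of $e^{m_1}f^n e^{m_2}$; combined with Lemma \ref{lem:Conjugate_of_RV_is_RV}, this lets me cyclically reshuffle and put whichever ``offending'' factor I want into the outermost position of $\phi$. Second, any positive word in Dehn twists is right-veering, so failure of right-veeringness must originate from a factor with negative exponent---this is essentially the content of Lemma \ref{lem:OT+Left_twist}. Together these tools reduce each case to choosing an arc whose image under the positive part of $\phi$ is controlled at $p$, after which the negative-exponent factors push $\phi(\gamma)$ to the left.

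For case (OT1), assume without loss of generality that $r_1<0$. I would choose an arc $\gamma$ with endpoint on $C_1$ whose image under $e^{m_1}f^n e^{m_2}$ can be controlled (for example, via minimal crossings with $e$ and $f$), so that the additional $|r_1|$ left Dehn twists contributed by $D_a^{r_1}$ push $\phi(\gamma)$ strictly to the left of $\gamma$ at $p$. Case (OT2) is analogous: when some $r_k=0$, the condition $\min\{m,n\}<0$ plays the role of the missing negative boundary twist. Cases (OT3) and (OT4) are the sharp ones: $\min\{r_k\}=1$ gives only a single positive boundary twist that almost compensates, and the hypothesis $mn\ge 2$ with $\min\{m,n\}<0$ then produces a tipping-point calculation. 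Here the argument splits according to which $C_k$ realizes $\min\{r_k\}$, because the four boundaries sit in non-equivalent positions relative to $e$ and $f$; the symmetry between $e$ and $f$ noted in the paragraph preceding the theorem then lets me treat only one of (OT3),(OT4) in detail.

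The main obstacle I anticipate is the bookkeeping in (OT3) and (OT4): verifying that $\phi(\gamma)$ is strictly to the left of $\gamma$ in these sharp cases requires an explicit isotopy tracking how $\gamma$ winds around $e$ and $f$ after each successive twist, rather than the softer domination argument that suffices for (OT1) and (OT2). The test arc must be picked so that it sees the negative middle exponents through the separating curves but does not accumulate too much positive twisting from the single $r_k=1$ factor.
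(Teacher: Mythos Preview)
Your overall strategy---reduce to $\phi'=a^{r_1}b^{r_2}c^{r_3}d^{r_4}e^{m}f^{n}$ via conjugation, then exhibit an explicit arc sent to the left---is exactly what the paper does. Two points need correction, one cosmetic and one substantive.

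First, the characterization you invoke is misstated. Theorem~\ref{hokama} says $\xi$ is tight iff \emph{every} supporting open book has right-veering monodromy; hence ``$\phi\notin$ \ve'' implies overtwisted, but the converse fails. Only the forward implication is needed here, so your argument survives, but the sentence as written is false.

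Second, and more importantly, you have not accounted for the positive boundary twists at components \emph{other} than the one where your test arc begins. Concretely, in (OT1) with $r_1<0$ you pick $\gamma$ based at $C_1$, but $\phi'$ also contains $b^{r_2}c^{r_3}d^{r_4}$ with possibly large positive exponents; these are right-veering and, since your arc must terminate on some $C_j$, they do act on $\gamma$. Lemma~\ref{lem:OT+Left_twist} does not help here because it only controls \emph{negative} twists applied after a left-veering map. The paper handles this with Lemma~\ref{lem:twisting_OT} and Corollary~\ref{cor:twisting_OT}: once $h(\gamma)$ is to the left at $C_i$, a positive twist along any $\delta$ parallel to $C_k$ with $k\neq i$ keeps it to the left. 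You never invoke this, and your phrase ``cyclically reshuffle and put whichever offending factor into the outermost position'' does not substitute for it---commuting $b^{r_2}c^{r_3}d^{r_4}$ past other factors does not make them disappear. The same issue recurs in (OT2)--(OT4), where the paper explicitly draws the worst-case picture (e.g.\ $m>0,n>0,r_k=-1$ for (OT1)) and then appeals to Lemma~\ref{lem:twisting_OT} for the remaining positive $r_j$'s.

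A minor difference: you reduce from $\phi$ to $\phi'$ via Lemma~\ref{lem:Conjugate_of_RV_is_RV} (conjugate of right-veering is right-veering, hence contrapositively conjugate of non-right-veering is non-right-veering), whereas the paper uses Theorem~\ref{thm:Conjugates_Are_Contactomorphic} (conjugate monodromies give contactomorphic contact manifolds). Both are valid routes to the same reduction.
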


\medskip \noindent {\em Acknowledgments.\/} The authors would like to
thank Selman Akbulut, \c{C}a\u{g}r\i{} Karakurt, Yank\i{} Lekili, Burak \" Ozba\u{g}c\i, and Andr\'as
Stipsicz  for helpful conversations and remarks.
%
%
\section{Preliminaries} \label{sec:preliminaries}

First, we state the following classical fact which will be used in
Section \ref{sec:4-Holed}. We also give a proof since the authors
couldn't find the given version of the theorem in the literature.

\begin{theorem} \label{thm:Conjugates_Are_Contactomorphic}
Let S be any surface with nonempty boundary, and let $\sigma, h \in$
Aut$(S,\partial S)$. Then there exists a contactomorphism
$$(M_{(S,h)},\xi_{(S,h)}) \cong (M^{\prime}_{(S,\sigma h \sigma^{-1})},\xi^{\prime}_{(S,\sigma h \sigma^{-1})}). $$
\end{theorem}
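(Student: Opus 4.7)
The plan is to construct an explicit diffeomorphism $\Phi: M_{(S,h)} \to M^{\prime}_{(S,\sigma h \sigma^{-1})}$ that takes the open book structure of the domain onto that of the target, and then upgrade it to a contactomorphism by invoking Giroux's uniqueness of compatible contact structures \cite{Gi}. The conjugating diffeomorphism $\sigma$ will be used as the fiberwise ingredient.

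First I would work at the level of mapping tori. Recall that $M_{(S,h)}$ is built from the mapping torus $T_h = (S \times [0,1])/((x,1) \sim (h(x),0))$ by gluing a standard solid torus along each boundary component $C_i \times S^1$ of $\partial T_h$. Define $\widetilde{\Phi}: S \times [0,1] \to S \times [0,1]$ by $\widetilde{\Phi}(x,t) = (\sigma(x), t)$. A direct check shows that in $T_{\sigma h \sigma^{-1}}$ one has $\widetilde{\Phi}(x,1) = (\sigma(x),1) \sim (\sigma h \sigma^{-1}(\sigma(x)), 0) = (\sigma h(x),0) = \widetilde{\Phi}(h(x),0)$, so $\widetilde{\Phi}$ descends to a diffeomorphism $\Phi_0: T_h \to T_{\sigma h \sigma^{-1}}$. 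Because $\sigma \in Aut(S,\partial S)$ restricts to the identity on $\partial S$, the map $\Phi_0$ is the identity on the common boundary $\bigsqcup_i C_i \times S^1$ and therefore extends by the identity across the glued solid tori to a global diffeomorphism $\Phi : M_{(S,h)} \to M^{\prime}_{(S,\sigma h \sigma^{-1})}$.

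Next I would check that $\Phi$ respects the open book data. The binding of each open book is the disjoint union of the core circles of the solid tori, and $\Phi$ is the identity there, so bindings are exchanged. On the mapping torus part, the pages of $(S,h)$ are the surfaces $S \times \{t\}$, and $\Phi$ carries each such page (via $\sigma$) onto the page $S \times \{t\}$ of $(S,\sigma h \sigma^{-1})$, so pages go to pages and the orientation of the fibration over $S^1$ is preserved. Thus $\Phi$ is an isomorphism of open books.

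Finally, I would consider the pushforward contact structure $\Phi_{*}\xi_{(S,h)}$ on $M^{\prime}_{(S,\sigma h \sigma^{-1})}$. Since $\xi_{(S,h)}$ is compatible with $(S,h)$ and $\Phi$ is an isomorphism of open books, $\Phi_{*}\xi_{(S,h)}$ is a contact structure compatible with $(S,\sigma h \sigma^{-1})$. By Giroux's theorem, any two contact structures compatible with the same open book are isotopic, hence $\Phi_{*}\xi_{(S,h)}$ is isotopic to $\xi^{\prime}_{(S,\sigma h \sigma^{-1})}$; composing $\Phi$ with the time-one map of an isotopy realizing this then gives the desired contactomorphism. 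The main subtle point to nail down carefully is the extension of $\Phi_0$ across the glued solid tori as an open book isomorphism (not merely as a topological diffeomorphism), and this is exactly where the condition $\sigma|_{\partial S}=\mathrm{id}$ is essential; aside from that, the argument is a direct unwinding of the definitions.
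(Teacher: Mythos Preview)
Your argument is correct and follows the same overall arc as the paper's: show that the mapping tori $T_h$ and $T_{\sigma h\sigma^{-1}}$ are diffeomorphic by a map that is the identity on the boundary tori, extend over the glued solid tori, and then appeal to Giroux's uniqueness to pass from an open book isomorphism to a contactomorphism. The difference is in how the mapping torus step is carried out. The paper breaks $S(\sigma h\sigma^{-1})$ into four stacked copies of $S\times I$ glued by $\sigma$, $h$, $\sigma^{-1}$, $id$ in turn, and then observes that a cyclic permutation of these gluing maps (legitimate because the base is $S^1$) collapses the monodromy to $\sigma^{-1}\cdot id\cdot\sigma\cdot h=h$; you instead write down the explicit fiberwise diffeomorphism $\widetilde\Phi(x,t)=(\sigma(x),t)$ and verify directly that it descends. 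Your version is cleaner and makes transparent exactly where $\sigma|_{\partial S}=\mathrm{id}$ is used (namely in extending over the solid tori as an open book map), while the paper's cyclic-permutation picture is more geometric but leaves that point, and the invocation of Giroux for the contact structure, implicit.
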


\begin{proof}
The proof based on idea of breaking up the monodromy $\sigma h
\sigma^{-1}$ into pieces as depicted in Figure
\ref{BreakingMonodromyIntoPieces}. First take each glued solid torus
(around each binding component) out from both
$(M_{(S,h)},\xi_{(S,h)})$ and $(M^{\prime}_{(S,\sigma h
\sigma^{-1})},\xi^{\prime}_{(S,\sigma h \sigma^{-1})})$ to get the
mapping tori $S(h)$ and $S(\sigma h \sigma^{-1})$. By breaking the
monodromy $\sigma h \sigma^{-1}$, the mapping torus $S(\sigma h
\sigma^{-1})= [0,1]\times S/(1,x)\sim (0, \sigma h \sigma^{-1}(x))$
can be constructed also as follows: We write
\[S(\sigma h \sigma^{-1})=(\coprod_{i=1}^{4} S_i)/\sim,\]
where $S_i=S\times[\frac{i-1}{4}, \frac{i}{4}]$ and $\sim$ is the
equivalence relation that glues $S\times\{\frac{1}{4}\}$ in $S_1$ to
$S\times\{\frac{1}{4}\}$ in $S_2$ by $\sigma$, glues
$S\times\{\frac{1}{2}\}$ in $S_2$ to $S\times\{\frac{1}{2}\}$ in
$S_3$ by $h$, glues $S\times\{\frac{3}{4}\}$ in $S_3$ to
$S\times\{\frac{3}{4}\}$ in $S_4$ by $\sigma^{-1}$, glues
$S\times\{1\}$ in $S_4$ to $S\times\{0\}$ in $S_1$ by the identity
map $id$. (See the picture on the left in Figure
\ref{BreakingMonodromyIntoPieces}.)

\begin{figure}[ht]
  \begin{center}
   \includegraphics{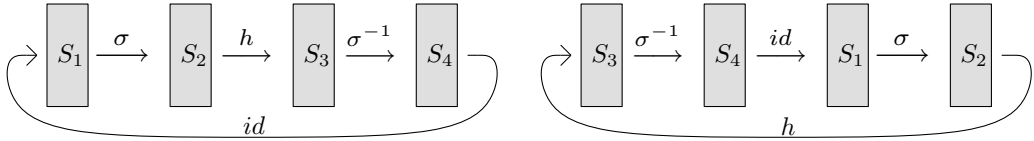}
   \caption{Mapping torus $S(\sigma h \sigma^{-1})$, before and after the cyclic permutation.}
  \label{BreakingMonodromyIntoPieces}
    \end{center}
\end{figure}

Since $S(\sigma h \sigma^{-1})$ is a fiber bundle over the circle
$S^1$, we are free to change its monodromy by any cyclic
permutation. Therefore, the monodromy element $\sigma^{-1}\cdot id
\cdot \sigma h=h $ also gives the same fiber bundle $S(\sigma h
\sigma^{-1})$ (the picture on the right in Figure
\ref{BreakingMonodromyIntoPieces} shows the new configuration of
$S(\sigma h \sigma^{-1})$ after the cyclic permutation). Therefore,
$S(h)=S(\sigma h \sigma^{-1})$. By gluing all solid tori back using identity, we
conclude that $(M_{(S,h)},\xi_{(S,h)})$ is contactomorphic to
$(M^{\prime}_{(S,\sigma h \sigma^{-1})},\xi^{\prime}_{(S,\sigma h
\sigma^{-1})})$.
\end{proof}


A \emph{Stein manifold} of dimension four is a triple
$(X^4,J,\psi)$ where $J$ is a complex structure on $X$, $\psi: X
\rightarrow \R$, and the $2-$form $\omega_\psi=-d(d\psi \circ J)$
is non-degenerate. We say that $(M^3,\xi)$ is \emph{Stein
(holomorphically) fillable} if there is a Stein manifold
$(X^4,J,\psi)$ such that $\psi$ is bounded from below, $M$ is a
non-critical level of $\psi$, and $-(d\psi\circ J)$ is a contact
form for $\xi$. The following fact was first implied in \cite{LP},
and then in \cite{AO}. The version given below is due to Giroux
and Matveyev. For a proof, see \cite{OS}.

\begin{theorem}\label{Holofill=PosDehnTwist}
A contact structure $\xi$ on $M^3$ is holomorphically fillable if
and only if  $\xi$ is supported by some open book whose monodromy
admits a factorization into positive Dehn twists only.
\end{theorem}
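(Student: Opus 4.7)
The plan is to reduce the theorem to the equivalence between Stein fillings and allowable Lefschetz fibrations (ALFs) over the disk, established in the work of Loi--Piergallini and Akbulut--Ozbagci, combined with two standard facts: the monodromy of an ALF around $\partial D^2$ factors as a product of right-handed Dehn twists along its vanishing cycles, and restricting an ALF to $\partial D^2$ produces an open book supporting the induced contact structure on the boundary 3-manifold.

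For the ``if'' direction, assume $h$ admits a factorization $h=D_{\gamma_1}\cdots D_{\gamma_n}$ into positive Dehn twists along simple closed curves $\gamma_1,\ldots,\gamma_n \subset S$. Build a 4-manifold $X$ by starting with $S \times D^2$ and attaching a $2$-handle along each $\gamma_i \subset S \times \{p_i\}$, with framing one less than the page framing and $p_1,\ldots,p_n$ distinct points on $\partial D^2$. The resulting $X$ is the total space of an allowable Lefschetz fibration $\pi \colon X \to D^2$ with regular fiber $S$ and vanishing cycles $\gamma_1,\ldots,\gamma_n$, and its boundary inherits the open book $(S,h)$. By the Akbulut--Ozbagci construction, $X$ carries a Stein structure whose induced contact structure on $\partial X = M_h$ is supported by this open book, so $\xi_h$ is holomorphically fillable.

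For the ``only if'' direction, suppose $(X,J,\psi)$ is a Stein filling of $(M,\xi)$. By Eliashberg's topological characterization, $X$ admits a handle decomposition with a single $0$-handle, some $1$-handles, and $2$-handles attached along a Legendrian link in $\#^k(S^1 \times S^2)$ with framings $tb-1$. The Loi--Piergallini theorem then realizes $X$ as the total space of an allowable Lefschetz fibration over $D^2$, whose monodromy, being a composition of the local monodromies around its critical values, is automatically a product of positive Dehn twists along the vanishing cycles. Restricting the fibration to $\partial X$ yields an open book $(S,h)$ supporting $\xi$ (by Giroux's compatibility result for boundaries of Stein Lefschetz fibrations), and the monodromy factorization of the fibration is precisely the desired positive factorization of $h$.

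The main obstacle is the Loi--Piergallini direction: converting an arbitrary Stein structure into an ALF requires a nontrivial stabilization procedure to place all Legendrian attaching circles as disjoint vanishing cycles on a single page. The other subtle point is verifying that the open book on $\partial X$ coming from the ALF supports the \emph{original} contact structure $\xi$, not merely some contact structure on $M$; this reduces to a local model computation near each binding component together with an application of Giroux's uniqueness theorem.
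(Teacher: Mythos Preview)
The paper does not actually prove this theorem: it is stated as a known result attributed to Giroux and Matveyev, with the reader referred to \cite{OS} for a proof (and with \cite{LP}, \cite{AO} cited as the original sources). So there is no ``paper's own proof'' to compare against beyond these citations.

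Your sketch is the standard argument one finds in those references. The ``if'' direction via building an allowable Lefschetz fibration from a positive factorization and invoking the Akbulut--Ozbagci Stein structure is exactly right, as is the ``only if'' direction via Loi--Piergallini (or the Akbulut--Ozbagci handle argument) to realize a Stein filling as an ALF and then read off the positive factorization from the vanishing cycles. One small point worth tightening: in the ``if'' direction you should note that any $\gamma_i$ bounding a disk in $S$ contributes a trivial Dehn twist and can be dropped from the factorization, so that the resulting Lefschetz fibration is genuinely \emph{allowable} (no reducible fibers), which is what the Stein-structure construction requires. Your closing paragraph correctly identifies the two genuinely nontrivial ingredients (the stabilization procedure in Loi--Piergallini, and the compatibility of the boundary open book with the original $\xi$); these are precisely the places where the cited references do the real work.
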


\medskip
\noindent \emph{Right-veering Diffeomorphisms:} We recall the
right-veering diffeomorphisms originally introduced in \cite{HKM1}.
If $S$ is a compact oriented surface with $\partial S \neq
\emptyset$, the submonoid $Veer(S,\partial S)$ of \rv elements in
$Aut(S,\partial S)$ is defined as follows: Let $\alpha$ and $\beta$
be isotopy classes (relative to the endpoints) of properly embedded
oriented arcs $[0,1]\rightarrow S$ with a common initial point
$\alpha(0)=\beta(0)=x\in \partial S$. Let $\pi:\tilde S\rightarrow
S$ be the universal cover of $S$ (the interior of $\tilde S$ will
always be $\R^2$ since $S$ has at least one boundary component), and
let $\tilde x\in \partial \tilde S$ be a lift of $x\in \partial S$.
Take lifts $\tilde \alpha$ and $\tilde \beta$ of $\alpha$ and
$\beta$ with $\tilde \alpha(0) =\tilde \beta(0)=\tilde x$. $\tilde
\alpha$ divides $\tilde S$ into two regions -- the region ``to the
left'' and the region ``to the right''. We say that $\beta$ is {\em
to the right} of $\alpha$, denoted $\alpha \ge \beta$, if either
$\alpha=\beta$ (and hence $\tilde\alpha(1)=\tilde\beta(1)$), or
$\tilde\beta(1)$ is in the region to the right (Figure
\ref{BetaRightAlfa}).
\begin{figure}[ht]
  \begin{center}
   \includegraphics{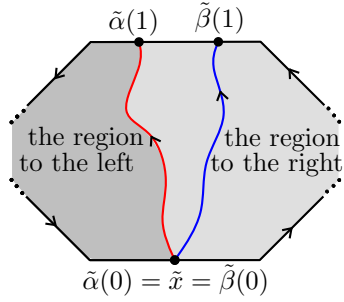}
   \caption{Lifts of $\alpha$ and $\beta$ in the universal cover $\tilde S$.}
  \label{BetaRightAlfa}
    \end{center}
\end{figure}

As an alternative way to passing to the universal cover, we first isotope
$\alpha$ and $\beta$, while fixing their endpoints, so that they
intersect transversely (including at the endpoints) and with the
fewest possible number of intersections. Then $\beta$ is to the
right of $\alpha$ if the tangent vectors $(\dot \beta(0),
\dot\alpha(0))$ define the orientation on $S$ at $x$.

\begin{definition}[\cite{HKM1}]
Let $h:S \to S$ be a diffeomorphism that restricts to the identity
map on $\partial S$. Let $\alpha$ be a properly embedded oriented
arc starting at a basepoint $x \in \partial S$. Then $h$ is {\em
right-veering} (that is, $h \in Veer(S,\partial S)$) if for every
choice of basepoint $x \in
\partial S$ and every choice of $\alpha$ based at $x$, $h(\alpha)$
is to the right of $\alpha$ (at $x$). If $C$ is a boundary component
of $S$, we say is $h$ is {\em right-veering with respect to $C$} if
$h(\alpha)$ is to the right of $\alpha$ for all $\alpha$ starting at
a point on $C$.
\end{definition}
It turns out that $Veer(S,\partial S)$ is a submonoid and we have
the inclusions:
\begin{center}
 $Dehn^{+}(S,\partial S) \subset Veer(S,\partial S) \subset Aut(S,\partial S)$.
\end{center}

We will use the following two results of \cite{HKM1}.
\begin{theorem}[\cite{HKM1}] \label{hokama}
A contact structure $(M,\xi)$ is tight if and only if all of its
compatible open book decompositions $(S,h)$ have right-veering $h
\in Veer(S,\partial S) \subset Aut(S,\partial S)$.
\end{theorem}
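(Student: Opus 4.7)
The plan is to establish both directions via their contrapositives.

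\emph{Non-right-veering in some supporting open book implies $\xi$ is overtwisted.} I would assume $(S,h)$ is compatible with $\xi$ and $h$ is not right-veering, so that there is a properly embedded arc $\alpha$ based at $x\in\partial S$ with $h(\alpha)$ strictly to the left of $\alpha$ at $x$. After isotopy rel endpoints, I arrange that $\alpha$ and $h(\alpha)$ intersect minimally and transversely. Inside the mapping torus of $h$, the strip $\alpha\times[0,1]$ descends to a rectangle whose bottom edge is $\alpha$, whose top edge is $h(\alpha)$ in the same page, and whose two vertical edges lie on the binding; capping the vertical edges with meridian disks from the solid tori filling in the binding produces an embedded disk $D\subset M$. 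I then put $\partial D$ into Legendrian position and make $D$ convex. The ``left-of'' hypothesis should translate into a lower bound on the Thurston--Bennequin number of $\partial D$, namely $tb(\partial D)\geq 0$, contradicting the Bennequin--Eliashberg inequality for tight contact structures; hence $\xi$ must be overtwisted.

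\emph{Overtwisted $\xi$ admits a non-right-veering supporting open book.} Given an overtwisted disk $D_{ot}\subset(M,\xi)$, I would start from any open book supporting $\xi$ and perform a sequence of positive stabilizations (which preserve $\xi$ by Giroux's theorem) until a page $S$ contains an arc $\alpha$ for which a piece of $D_{ot}$ can be arranged as a bigon bounded by $\alpha$ and $h(\alpha)$. The presence of this bigon forces $h(\alpha)$ to lie to the left of $\alpha$, so the resulting monodromy fails to be right-veering. Together with the first direction, this completes the equivalence.

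The decisive step will be the first direction, specifically translating the combinatorial condition ``$h(\alpha)$ is to the left of $\alpha$'' into the framing inequality $tb(\partial D)\geq 0$. Constructing $D$ and placing $\partial D$ in Legendrian position should be routine, but tracking the Thurston--Bennequin contribution from each of the four edges of $D$ (in particular the two arcs on the binding, which get capped by meridian disks in the solid tori) will require careful convex-surface bookkeeping and a Legendrian-realization argument on the convex disk $D$; this is the heart of the Honda--Kazez--Mati\'c proof.
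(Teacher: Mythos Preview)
The paper does not give a proof of this statement. Theorem~\ref{hokama} is quoted from \cite{HKM1} and used as a black box (see the sentence ``We will use the following two results of \cite{HKM1}'' immediately preceding it). There is therefore no proof in the paper to compare your proposal against.

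A brief remark on your sketch itself: the outline of the first direction is in the right spirit, and you correctly identify the delicate point as the translation of ``$h(\alpha)$ to the left of $\alpha$'' into a Thurston--Bennequin inequality; in \cite{HKM1} this is carried out not by directly computing $tb(\partial D)$ for the disk you describe, but via a bypass argument along a Legendrian ruling of the binding torus, which is somewhat more involved than your description suggests. For the second direction, your plan of stabilizing until the overtwisted disk appears as a bigon in a page is vague as stated; the argument in \cite{HKM1} instead appeals to the fact (due to Giroux and to Goodman's sobering-arc criterion) that any overtwisted contact structure is supported by an open book that is a negative stabilization, and a single negative Dehn twist about a boundary-parallel curve visibly sends some arc to the left. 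If you intend to include a proof here rather than cite \cite{HKM1}, those are the places that would need to be fleshed out.
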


\begin{lemma}[\cite{HKM1}] \label{lem:subsurface}
Let $S$ be a hyperbolic surface with geodesic boundary and $\gamma \in
Aut(S,\partial S)$. Let $S'\subsetneq S$ be a subsurface, also with
geodesic boundary, and let $C$ be a common boundary component of $S$
and $S'$. If $\gamma$ is the identity map when restricted to $S'$,
$\delta$ is a closed curve parallel to and disjoint from $C$, and
$m$ is a positive integer, then $D^m_\delta \cdot \gamma$ is right-veering with
respect to $C$.
\end{lemma}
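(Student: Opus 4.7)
I would reduce to the base case $m = 1$ and then analyze that case in the universal cover. For the reduction, observe that $D_\delta^m \cdot \gamma = D_\delta^{m-1} \cdot (D_\delta \cdot \gamma)$. The factor $D_\delta^{m-1}$ lies in $Dehn^{+}(S,\partial S) \subset Veer(S,\partial S)$ and is therefore right-veering (in particular, right-veering with respect to $C$). Right-veering with respect to $C$ is preserved under composition: if $h_1$ and $h_2$ are both right-veering with respect to $C$, then for any arc $\alpha$ starting at $x \in C$ we have $h_1(h_2(\alpha)) \geq h_2(\alpha) \geq \alpha$, using that $h_2(\alpha)$ also starts at $x$ and invoking transitivity of $\geq$. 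Thus it is enough to show that $D_\delta \cdot \gamma$ is right-veering with respect to $C$.

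For the base case, let $\alpha$ be a properly embedded arc starting at $x \in C$, isotoped to minimize its intersection with $\delta$. If $\alpha \cap \delta = \emptyset$, then $\alpha$ lies entirely in the annulus $A$ cobounded by $C$ and $\delta$, which we may take to be contained in $S'$ (by choosing the representative of $\delta$ within its isotopy class sufficiently close to $C \subset \partial S'$; this does not affect $D_\delta$). Hence $\gamma(\alpha) = \alpha$ and $D_\delta(\alpha) \geq \alpha$ by the right-veering property of the single positive Dehn twist $D_\delta$. Otherwise, I would pass to the universal cover $\tilde S \subset \mathbb{H}^2$, fix a lift $\tilde x \in \tilde C$ of $x$, the strip-like lift $\tilde A$ of $A$ with $\tilde x \in \partial \tilde A$, and the opposite boundary component $\tilde \delta$ of $\tilde A$. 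Choose the lift $\tilde \gamma$ of $\gamma$ that restricts to the identity on $\tilde A$ (possible since $A \subset S'$ and $\gamma|_{S'} = id$) and the lift $\widetilde{D_\delta}$ of $D_\delta$ that restricts to the identity on $\tilde C$; both then fix $\tilde x$. Let $\tau$ denote the deck transformation of $\tilde S$ generating the stabilizer of $\tilde C$; the shear description of a positive Dehn twist in the universal cover shows that $\widetilde{D_\delta}|_R = \tau$, where $R$ is the component of $\pi^{-1}(S \setminus A)$ adjacent to $\tilde \delta$ on the side opposite $\tilde A$.

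Lifting $\alpha$ to $\tilde \alpha$ starting at $\tilde x$ and letting $\tilde p$ be the first crossing of $\tilde \alpha$ with $\tilde \delta$, one verifies that the corresponding first crossing of $\widetilde{D_\delta \gamma}(\tilde \alpha)$ with $\tilde \delta$ occurs at $\tau(\tilde p)$, strictly to the right of $\tilde p$ along $\tilde \delta$. The main obstacle is to leverage this initial rightward shift to conclude that the final endpoint of $\widetilde{D_\delta \gamma}(\tilde \alpha)$ lies in the closed right region of $\tilde S \setminus \tilde \alpha$, despite the potentially complicated action of $\tilde \gamma$ on regions outside $\pi^{-1}(S')$ and despite possibly multiple subsequent crossings of $\tilde \alpha$ with $\tilde \delta$. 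I would address this by induction on the number of crossings of $\tilde \alpha$ with $\tilde \delta$: each additional crossing contributes a further rightward shift by $\tau$, which accumulates to dominate any leftward displacement introduced by $\tilde \gamma$ on components of $\pi^{-1}(S)$ outside $\pi^{-1}(S')$, ensuring that $\widetilde{D_\delta \gamma}(\tilde \alpha)(1)$ lies on the right side of $\tilde \alpha$ along the geodesic arcs composing $\partial \tilde S$.
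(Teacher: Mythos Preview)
First note that the paper does not supply its own proof of this lemma; it is quoted verbatim from \cite{HKM1} and used as a black box. So there is no ``paper's proof'' to compare against, and your attempt has to stand on its own.

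Your reduction to $m=1$ and the case $\alpha\cap\delta=\emptyset$ are fine, but the main case has a genuine gap. The inductive claim that successive crossings of $\tilde\alpha$ with $\tilde\delta$ each contribute a rightward $\tau$-shift that ``accumulates to dominate any leftward displacement introduced by $\tilde\gamma$'' does not work. In minimal position $\tilde\alpha$ meets the single lift $\tilde\delta$ exactly once (a second crossing would project to a bigon between $\alpha$ and $\delta$), so there is nothing to induct on; and you give no argument for why one $\tau$-shift should outweigh whatever $\tilde\gamma$ does beyond $\tilde\delta$. In fact your argument only uses that $\gamma$ is the identity on the annulus $A$ between $C$ and $\delta$, and with that hypothesis alone the conclusion is false: every element of $Aut(S,\partial S)$ can be isotoped off a collar of $C$, so one could take $\gamma$ isotopic to $D_\delta^{-2}$ and get $D_\delta\cdot\gamma=D_\delta^{-1}$, which is not right-veering with respect to $C$.

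The hypothesis you have not used is that $S'$ has \emph{geodesic} boundary. This forces $S'$ to be strictly larger than a boundary-parallel annulus: any boundary component $\delta'\subset\partial S'\setminus C$ is a geodesic, hence not isotopic to $C$ (its geodesic representative would be $C$ itself). The lifts of such $\delta'$ lying on $\partial\tilde{S'}$ therefore have endpoints on $L_\infty$, and since $\tilde\gamma$ fixes $\tilde{S'}$ pointwise these endpoints are fixed by $\gamma_\infty$. They form a nonempty $\tau$-invariant subset of $L_\infty$, so there is at least one fixed point of $\gamma_\infty$ in every fundamental domain of $\tau$. Consequently $\gamma_\infty$ maps each complementary interval (of length at most one $\tau$-period) to itself, while $(D_\delta^m)_\infty$ acts as $\tau^m$ and shifts every point by $m$ full periods to the right; composing gives $(D_\delta^m\cdot\gamma)_\infty(z)\le z$ for all $z\in L_\infty$. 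That bound on the displacement of $\gamma_\infty$, coming from the geometry of $S'$, is the mechanism that replaces your unproven ``domination'' claim.
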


\begin{remark}
This lemma is useful in proving right-veering property and will be
used in the proof of Theorem \ref{thm:Phi_Right_Veering}. Note that in the case
of four$-$punctured sphere, $S'$ must have at least two holes to
apply Lemma \ref{lem:subsurface}
\end{remark}

%
%
\section{Right-Veering Diffeomorphisms and Overtwisted contact structures} \label{sec:ot}

In this section we will give the results which will be used to prove Theorem \ref{thm:overtwisted}.

\begin{lemma} \label{lem:twisting_OT}
Let $S$ be a planar hyperbolic surface with geodesic boundary
$\partial S = \cup_{i=1}^{l} {C_i}$, $l\ge 4$. Suppose $h \in
Aut(S,\partial S)$ and there is a properly embedded arc $\gamma$
starting at $x \in C_i$, ending at $C_j$ such that $h(\gamma)$ is to
the left of $\gamma$ at $x$ and $i \ne j$. Then $(h \cdot D_\delta)
(\gamma)$ is to the left of $\gamma$ at $x \in C_i$ for any curve
$\delta$ parallel to $C_k$ with $k\ne i$.
\end{lemma}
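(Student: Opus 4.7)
I would prove this by lifting to the universal cover $\pi:\tilde S\to S$ and tracking where the relevant endpoint goes.

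Pick a lift $\tilde x\in\partial\tilde S$ of $x$, and let $\tilde\gamma$ be the lift of $\gamma$ starting at $\tilde x$; denote its other endpoint by $\tilde z$, which lies on some lift $\tilde C_j^0$ of $C_j$. The arc $\tilde\gamma$ divides $\tilde S$ into a left region $L$ and a right region $R$. By hypothesis, the lift $\widetilde{h(\gamma)}$ starting at $\tilde x$ ends at some $\tilde y\in L$; since $h$ is the identity on $\partial S$, $\tilde y$ lies on some lift $\tilde C_j^m$ of $C_j$. Because $k\ne i$, $D_\delta$ is the identity on a neighborhood of $C_i$, so one can choose the lift $\tilde D_\delta$ of $D_\delta$ that is the identity near $\tilde x$; then $\widetilde{D_\delta(h(\gamma))}=\tilde D_\delta\circ\widetilde{h(\gamma)}$, and it suffices to show $\tilde D_\delta(\tilde y)\in\overline{L}$.

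The key geometric input is that $\tilde D_\delta$ preserves each lift of $C_j$ setwise, restricting on each such lift to a translation by the corresponding stabilizer generator in the deck group. The direction of that translation is pinned down by the right-veering property of the positive Dehn twist $D_\delta$: applied to $\tilde\gamma$ itself one gets $\tilde D_\delta(\tilde z)=[C_j]\cdot\tilde z\in\overline{R}$. When $k\ne j$, $\gamma$ can be isotoped (in its boundary-fixed class) to be disjoint from the boundary-parallel curve $\delta$, so this translation is in fact the identity; then $\tilde D_\delta$ is the identity on a neighborhood of $\tilde\gamma$, preserves $L$ setwise, and hence $\tilde D_\delta(\tilde y)\in L$.

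The delicate case is $k=j$. Since $\tilde\gamma$ meets the lifts of $C_j$ only at $\tilde z$, any lift $\tilde C_j^m$ with $m\ne 0$ is disjoint from $\tilde\gamma$ and therefore lies entirely in $L$ or in $R$; as $\tilde y\in L$, it forces $\tilde C_j^m\subset L$, and translation of $\tilde y$ along $\tilde C_j^m$ keeps $\tilde y$ in $L$. For $m=0$, write $\tilde y=[C_j]^n\cdot\tilde z$; the hypothesis $\tilde y\in L$ gives $n\leq -1$, so $\tilde D_\delta(\tilde y)=[C_j]^{n+1}\cdot\tilde z$ has $n+1\leq 0$ and hence lies in $\overline{L}$ (either strictly to the left of $\tilde z$ or exactly at $\tilde z$).

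The main obstacle will be justifying the setwise invariance of every $\tilde C_j^m$ under $\tilde D_\delta$ together with the identification of the precise induced translation; this reduces to a local shear calculation in a strip neighborhood of each lift $\tilde\delta^m$, which is routine once the lift $\tilde D_\delta$ has been normalized to be the identity near $\tilde x$.
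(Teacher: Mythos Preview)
Your approach is correct and takes a genuinely different route from the paper's. The paper argues directly in $S$: for $k\neq j$ it notes that both $\gamma$ and $h(\gamma)$ can be isotoped off the boundary-parallel curve $\delta$, so $D_\delta$ acts trivially on their isotopy classes; for $k=j$ it isolates a punctured-disk region $R\subset S$ bounded by subarcs of $\gamma$, $h(\gamma)$ and $\partial S$ near the basepoint $x$, observes that $D_\delta(R)$ is isotopic to $R$, and reads off from the picture that the left-veering configuration at $x$ persists. Your universal-cover argument replaces this picture by an endpoint calculation on the lifts of $C_j$, which is closer in spirit to the circle-at-infinity viewpoint the paper itself adopts for Lemmas~\ref{lem:OT+Left_twist} and~\ref{lem:Conjugate_of_RV_is_RV}. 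It is more uniform (no ad hoc region) at the cost of the structural claim that the normalized lift $\tilde D_\delta$ stabilizes each $\tilde C_j^m$ and acts there by the stabilizer generator; this follows because every path from $\tilde x$ to $\tilde C_j^m$ has algebraic intersection $+1$ with $\pi^{-1}(\delta)$ when $k=j$ (and $0$ when $k\neq j$, since $\delta$ then separates off only lifts of $C_k$), so the shear picked up is exactly one unit. Your remark that the direction of that unit translation is pinned down by applying $\tilde D_\delta$ to $\tilde\gamma$ itself is the clean way to fix the sign.

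One caveat you have already half-noticed: in the $m=0$, $n=-1$ branch you land exactly at $\tilde z$, i.e.\ $(h\cdot D_\delta)(\gamma)$ is isotopic to $\gamma$ rel endpoints. Under the paper's convention ``$\alpha\ge\beta$'' includes equality, so this outcome is \emph{not} strictly to the left and does not by itself witness failure of right-veering; the example $h=D_\delta^{-1}$ with $k=j$ shows the case genuinely occurs. The paper's pictorial Case~2 argument does not isolate this boundary situation either, so both proofs really establish ``to the left or equal to $\gamma$''. Keep this in mind if you intend to invoke the conclusion to certify non--right-veering via the same arc $\gamma$.
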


\begin{proof} Isotoping if necessary, we may assume that $\gamma$
and $h(\gamma)$ intersect minimally. We need to analyze two cases:

\noindent \textbf{Case 1.} Suppose $k\neq j$. Then we may assume
$\gamma \cap \delta =\emptyset$, and so $h(\gamma) \cap \delta =
\emptyset$. That is, $D_\delta$ fixes both $\gamma$ and
$h(\gamma)$. This implies that $D_\delta(h(\gamma))=h(\gamma)$ is to
the left of $\gamma$.

\noindent \textbf{Case 2.} Suppose $k=j$. First note that $h\neq
id_S$ since $h$ is not right-veering. Therefore, there exists a
region $R \subset S$ such that
\begin{enumerate}
\item $R$ is an embedded disk punctured $r-$times for some $0 < r < m-2$, and
\item $\partial R \subset \gamma \cup h(\gamma) \cup \partial S$.
\end{enumerate}
Let $C_{i_1}, \cdots, C_{i_r}$ be the common components of $\partial
S$ and $\partial R$. We may assume that $\partial R$ contains the
common initial point $x$ and the first intersection point $y$ (of
$\gamma$ and $h(\gamma)$) coming right after $x$ (See Figure
\ref{lemma_1_before}).

\begin{figure}[ht]
   \includegraphics{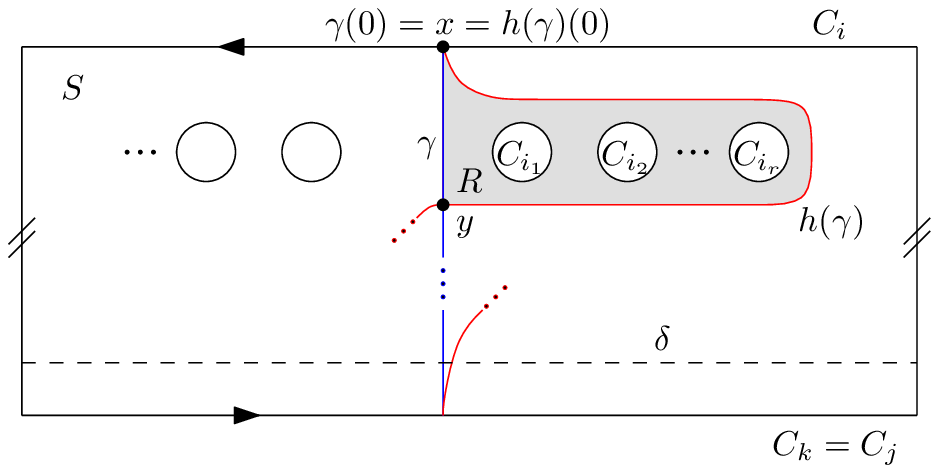}
   \caption{$h(\gamma)$ is to the left of $\gamma$ (left and right sides are identified).}
  \label{lemma_1_before}
\end{figure}

\begin{figure}[ht]
   \includegraphics{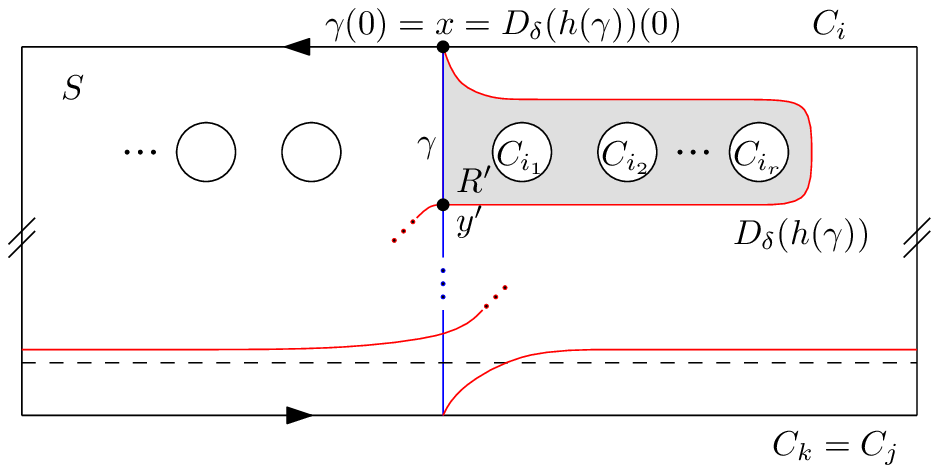}
   \caption{$D_\delta(h(\gamma))$ is to the left of $\gamma$ (left and right sides are identified).}
  \label{lemma_1_after}
\end{figure}

Since the Dehn twist $D_\delta$ is isotopic to the identity outside
of a small neighborhood of $\delta$, the image $R'=D_\delta(R)$ is
isotopic to $R$. In particular, $\partial R' \cap
D_\delta(h(\gamma))$ is to the left of $\partial R' \cap \gamma$
(see Figure \ref{lemma_1_after}). Note that $D_\delta(h(\gamma))$
and $\gamma$ are also intersecting minimally. Therefore, we conclude
that $(h \cdot D_\delta) (\gamma)=D_\delta(h(\gamma))$ is to the
left of $\gamma$.
\end{proof}

The following corollary of Lemma \ref{lem:twisting_OT} is immediate
with the help of Theorem \ref{hokama}.
\begin{corollary} \label{cor:twisting_OT}
Let $S$ be a planar hyperbolic surface with geodesic boundary
$\partial S = \cup_{i=1}^{l} {C_i}$, $l\ge 4$. Suppose $h \in
Aut(S,\partial S)$ is not right veering with respect to $C_i$ for
some $i$, and so the contact structure $\xi_{(S,h)}$ is overtwisted.
Then the contact structure $\xi_{(S,h\cdot{D_\delta}^k)}$ is also
overtwisted for any $k \in \Z_+$ and for any curve $\delta$ parallel
to the boundary component which is different than $C_i$. $\square$
\end{corollary}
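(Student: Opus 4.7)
The plan is an induction on $k$ combined with the contrapositive of Theorem \ref{hokama}. Since $h$ fails to be right-veering with respect to $C_i$, there is by definition a properly embedded arc $\gamma$ with initial point $x \in C_i$ such that $h(\gamma)$ lies to the left of $\gamma$ at $x$. In order to apply Lemma \ref{lem:twisting_OT}, the arc $\gamma$ must terminate on some boundary component $C_j$ with $j \neq i$; since $S$ is planar with at least four boundary components, a witness arc of this form can always be arranged (one may take the original arc and, if it returns to $C_i$, modify it slightly near its terminal endpoint to send it to a different component while preserving the left-veering condition at $x$, which is a local property at $x$).

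With such a $\gamma$ fixed, and $\delta$ parallel to some $C_k$ with $k \neq i$, the case $k=1$ of the statement is exactly the conclusion of Lemma \ref{lem:twisting_OT}: the arc $(h \cdot D_\delta)(\gamma)$ lies to the left of $\gamma$ at $x$. For the inductive step, assume $(h \cdot D_\delta^{k-1})(\gamma)$ lies to the left of $\gamma$ at $x$. Setting $h' := h \cdot D_\delta^{k-1}$, the triple $(h', \gamma, \delta)$ satisfies all the hypotheses of Lemma \ref{lem:twisting_OT}: the same arc $\gamma$ still starts on $C_i$ and ends on $C_j \neq C_i$, the curve $\delta$ is still parallel to $C_k$ with $k \neq i$, and by inductive hypothesis $h'(\gamma)$ is to the left of $\gamma$ at $x$. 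The lemma therefore yields that $(h' \cdot D_\delta)(\gamma) = (h \cdot D_\delta^k)(\gamma)$ is to the left of $\gamma$ at $x$.

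Consequently $h \cdot D_\delta^k$ is not right-veering with respect to $C_i$, and in particular it does not lie in $Veer(S, \partial S)$. By the contrapositive of Theorem \ref{hokama}, the compatible contact structure $\xi_{(S, h \cdot D_\delta^k)}$ cannot be tight, hence it is overtwisted. The only mildly delicate point is the first one, namely ensuring that the witness arc can be chosen to terminate on a component different from $C_i$; once that is in place, the induction is immediate because Lemma \ref{lem:twisting_OT} is tailored for bootstrapping, its conclusion feeding back verbatim into its hypothesis with the same arc and the same boundary data.
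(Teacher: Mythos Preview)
Your overall approach---induction on $k$ via Lemma \ref{lem:twisting_OT}, then the contrapositive of Theorem \ref{hokama}---is exactly what the paper intends when it calls the corollary ``immediate.'' The inductive step is fine: the conclusion of the lemma feeds back as its hypothesis with the same arc and boundary data.

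The one genuine weak point is your handling of the hypothesis $i\neq j$ in Lemma \ref{lem:twisting_OT}. You correctly notice that ``not right-veering with respect to $C_i$'' only gives a witness arc starting on $C_i$, which might return to $C_i$. But your proposed fix---``modify it slightly near its terminal endpoint to send it to a different component''---does not work. Rerouting the terminal endpoint to a different boundary circle is not a small perturbation; it changes the isotopy class of the arc globally, and there is no reason $h$ should still send the new arc to the left at $x$. The left-veering condition is local at $x$ only \emph{after} the two arcs have been put in minimal position (or lifted to the universal cover), and that step is sensitive to the whole arc.

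The correct resolution is simpler and does not require producing a new arc at all: observe that when the witness arc $\gamma$ has both endpoints on $C_i$, and $\delta$ is parallel to some $C_k$ with $k\neq i$, we are automatically in Case~1 of the proof of Lemma \ref{lem:twisting_OT}. Indeed, neither $\gamma$ nor $h(\gamma)$ meets $C_k$, so taking $\delta$ in a sufficiently thin collar of $C_k$ gives $\gamma\cap\delta=\emptyset=h(\gamma)\cap\delta$, whence $D_\delta(h(\gamma))=h(\gamma)$ is still to the left of $\gamma$. In other words, the hypothesis $i\neq j$ in the lemma is not actually needed for the conclusion when $k\neq i$; it only serves to set up the case split in the proof. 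With this remark, the induction and the appeal to Theorem \ref{hokama} go through without modifying the arc.
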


Let us now interpret the notion of right-veering in terms of the
circle at infinity as in \cite{HKM1}. Let $S$ be any hyperbolic
surface with geodesic boundary $\partial S$. The universal cover
$\pi: \tilde S\rightarrow S$ can be viewed as a subset of the
Poincar\'e disk $D^2 = \mathbb{H}^2 \cup S^1_\infty$. Let $C$ be a
component of $\partial S$ and $L$ be a component of $\pi^{-1}(C)$.
If $h\in Aut(S,\partial S)$, let $\tilde h$ be the lift of $h$ that
is the identity on $L$. The closure of $\tilde S$ in $D^2$ is a
starlike disk. $L$ is contained in $\partial \tilde S$. Denote its
complement in $\partial \tilde S$ by $L_\infty$. Orient $L_\infty$
using the boundary orientation of $\tilde S$ and then linearly order
the interval $L_\infty$ via an orientation-preserving homeomorphism
with $\R$. The lift $\tilde h$ induces a homeomorphism
$h_\infty:L_\infty \to L_\infty$. Also, given two elements $a,b$ in
$Homeo^+(\R)$, the group of orientation-preserving homeomorphisms of
$\R$, we write $a\geq b$ if $a(z)\geq b(z)$ for all $z\in \R$ and
$a>b$ if $a(z)>b(z)$ for all $z\in \R$. In this setting, an element
$h$ is right-veering with respect to $C$ if $id\geq h_\infty$.
Equivalently,  if $\alpha$ is any properly embedded curve starting
at a point $\alpha(0) \in C$, and $\tilde{\alpha}$ is the lift of
$\alpha$ starting at the lift $\tilde{\alpha}(0) \in L$ of $x$, then
we have
\begin{center}
$h(\alpha)$ is to the right of $\alpha$ $\Longleftrightarrow$
$\tilde{\alpha}(1) \geq h_{\infty}(\tilde{\alpha}(1))$
\end{center}
Therefore, $h$ is not right-veering with respect to $C$ if there is
an arc $\alpha$ starting at $C$ such that we have $\tilde{\alpha}(1)
< h_{\infty}(\tilde{\alpha}(1))$.

\begin{lemma} \label{lem:OT+Left_twist}
Let $S$ be any hyperbolic surface with geodesic boundary $\partial
S$. Suppose $h \in Aut(S,\partial S)$ and there is a properly
embedded arc $\gamma$ starting at $x \in C \subset \partial S$ such
that $h(\gamma)$ is to the left of $\gamma$ at $x$. Then $(h \cdot
D_\sigma^{-1} )(\gamma)$ is to the left of $\gamma$ at $x \in C$ for
any simple closed curve $\sigma$ in $S$.
\end{lemma}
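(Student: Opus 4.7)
The plan is to apply the circle-at-infinity reformulation of right-veering set up in the paragraph immediately preceding the lemma. Let $L$ be the component of $\pi^{-1}(C)$ through the lift $\tilde x$ of $x$, linearly order $L_\infty \subset \partial \tilde S$ as described, and recall that $h$ being right-veering with respect to $C$ is equivalent to $h_\infty \leq id$ on $L_\infty$ (so that ``to the right'' corresponds to ``weakly smaller'' on $L_\infty$). The whole proof will then be a one-line monotonicity argument on $L_\infty$, provided the three ingredients below are lined up.

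First, because $D_\sigma$ is a positive Dehn twist, it lies in $Dehn^+(S,\partial S) \subset Veer(S,\partial S)$ and is in particular right-veering with respect to $C$. Hence $(D_\sigma)_\infty \leq id$ on $L_\infty$, and inverting this inequality for the orientation-preserving homeomorphism $(D_\sigma)_\infty$ of $\R$ gives $(D_\sigma^{-1})_\infty \geq id$. Second, since $D_\sigma^{-1}$ is supported in a compact region of the interior of $S$, the lift $\widetilde{D_\sigma^{-1}}$ that fixes $L$ pointwise is unambiguous, and composing it with the lift $\tilde h$ that fixes $L$ pointwise produces a lift of $D_\sigma^{-1}\circ h$ which again fixes $L$ pointwise. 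Together with the paper's multiplication convention ($h \cdot D_\sigma^{-1}$ as a diffeomorphism equals $D_\sigma^{-1}\circ h$), this gives
\[
(h \cdot D_\sigma^{-1})_\infty \;=\; (D_\sigma^{-1})_\infty \circ h_\infty.
\]

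Third, the hypothesis that $h(\gamma)$ lies to the left of $\gamma$ at $x$ is exactly the statement $h_\infty(\tilde\gamma(1)) > \tilde\gamma(1)$ in $L_\infty$. Applying $(D_\sigma^{-1})_\infty$, which by the first step satisfies $(D_\sigma^{-1})_\infty(z) \geq z$ for every $z$, one concludes
\[
(h \cdot D_\sigma^{-1})_\infty(\tilde\gamma(1)) \;=\; (D_\sigma^{-1})_\infty\bigl(h_\infty(\tilde\gamma(1))\bigr) \;\geq\; h_\infty(\tilde\gamma(1)) \;>\; \tilde\gamma(1),
\]
which is exactly the assertion that $(h \cdot D_\sigma^{-1})(\gamma)$ is to the left of $\gamma$ at $x$.

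I expect no real analytic obstacle here; the whole content is bookkeeping. The delicate point is threefold: keeping the composition convention straight (right-multiplication in the word corresponds to post-composition of the diffeomorphism), verifying that the chosen lifts all fix the same arc $L$ so that the induced maps on $L_\infty$ compose literally, and not flipping the inequality when passing from $D_\sigma$ to $D_\sigma^{-1}$. Once these are set, the statement is strictly weaker than (and indeed a corollary of) the general monotonicity $id \leq (D_\sigma^{-1})_\infty$ on $L_\infty$, which is really the reason positive Dehn twists preserve right-veeringness while negative ones preserve its failure.
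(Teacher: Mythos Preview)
Your proposal is correct and follows essentially the same approach as the paper: both arguments pass to the induced homeomorphisms on $L_\infty$, use that $D_\sigma$ is right-veering to conclude $(D_\sigma^{-1})_\infty \geq id$, and then chain the inequalities $(D_\sigma^{-1})_\infty(h_\infty(\tilde\gamma(1))) \geq h_\infty(\tilde\gamma(1)) > \tilde\gamma(1)$. Your version is somewhat more explicit about the composition convention and the choice of lifts fixing $L$, but there is no substantive difference.
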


\begin{proof}
Write $\sigma$ for $D_\sigma$. Fix the identification of
$L_{\infty}$ with $\mathbb{R}$ as above. Consider the lift
$\tilde{\gamma}$ and induced homeomorphisms $h_\infty,
\sigma_\infty, \sigma^{-1}_\infty :L_\infty \to L_\infty$. Since
$\sigma^{-1}\cdot \sigma=id_S$, we have
$$(\sigma^{-1} \cdot \sigma)_\infty=\sigma_{\infty} \circ \sigma^{-1}_\infty=(id_S)_\infty.$$
Therefore, $\sigma^{-1}_\infty$ must map any point in $L_\infty$ to
its left because $\sigma$ is right-veering. In particular, $(h \cdot
\sigma^{-1})_\infty(\tilde{\gamma}(1))=\sigma^{-1}_\infty(h_\infty(\tilde{\gamma}(1)))$
is to the left of $h_\infty(\tilde{\gamma}(1))$ which is (by the
assumption) to the left of $\tilde{\gamma}(1)$. That is, $(h \cdot
\sigma^{-1})_\infty(\tilde{\gamma}(1))>h_\infty(\tilde{\gamma}(1))>\tilde{\gamma}(1)$.
\end{proof}

\begin{corollary} \label{cor:OT+Left_twist}
Let $S$ be a planar hyperbolic surface with geodesic boundary
$\partial S = \cup_{i=1}^{l} {C_i}$, $l\ge 4$. Suppose $h \in
Aut(S,\partial S)$ is not right veering with respect to $C_i$ for
some $i$, and so the contact structure $\xi_{(S,h)}$ is overtwisted.
Then the contact structure $\xi_{(S,h\cdot{D_\sigma}^k)}$ is also
overtwisted for any $k \in \Z_-$ and for any simple closed curve
$\sigma$ in $\Sigma$. $\square$
\end{corollary}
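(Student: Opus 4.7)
The plan is to iterate Lemma \ref{lem:OT+Left_twist} and then invoke Theorem \ref{hokama}. Since $h$ is not right-veering with respect to $C_i$, the characterization of right-veering via the circle at infinity (or directly, the definition) guarantees a properly embedded arc $\gamma$ based at some point $x \in C_i$ such that $h(\gamma)$ lies to the left of $\gamma$ at $x$. This is exactly the hypothesis of Lemma \ref{lem:OT+Left_twist}, so the same arc $\gamma$ will serve as the witness throughout the argument.

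Next I would induct on $n = -k \in \Z_+$ to show that $(h \cdot D_\sigma^{-n})(\gamma)$ is to the left of $\gamma$ at $x$ for every $n \geq 1$. The base case $n = 1$ is precisely Lemma \ref{lem:OT+Left_twist} applied to $h$ and $\sigma$. For the inductive step, set $h_n := h \cdot D_\sigma^{-n}$; by the inductive hypothesis $h_n(\gamma)$ is to the left of $\gamma$ at $x$, so Lemma \ref{lem:OT+Left_twist} applies with $h_n$ in place of $h$ and the same $\sigma$, giving
\[
(h_n \cdot D_\sigma^{-1})(\gamma) \;=\; (h \cdot D_\sigma^{-(n+1)})(\gamma)
\]
to the left of $\gamma$ at $x$, which closes the induction.

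Now for each $k \in \Z_-$ the element $h \cdot D_\sigma^k \in Aut(S,\partial S)$ fails to be right-veering with respect to $C_i$, because the arc $\gamma$ still witnesses the failure. Since $(S, h \cdot D_\sigma^k)$ is a compatible open book for $\xi_{(S, h \cdot D_\sigma^k)}$ whose monodromy is not right-veering, Theorem \ref{hokama} rules out tightness; hence $\xi_{(S, h \cdot D_\sigma^k)}$ is overtwisted, as claimed.

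There is essentially no technical obstacle here: all the geometric content is packaged in Lemma \ref{lem:OT+Left_twist} (which handles the single-twist comparison at the circle at infinity), and the inductive step reuses the same witnessing arc $\gamma$, so one never has to produce a new arc or re-verify any minimality of intersections. The only thing worth being a little careful about is that the planarity/number-of-boundary-components hypothesis does not actually enter the proof of the corollary itself — it is inherited from the statement and used implicitly only in the final invocation of Theorem \ref{hokama} via the supporting open book framework.
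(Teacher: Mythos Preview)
Your proposal is correct and is exactly the argument the paper has in mind: the corollary is marked with a $\square$ and no explicit proof, being regarded as immediate from iterating Lemma~\ref{lem:OT+Left_twist} and then applying Theorem~\ref{hokama}. Your observation that the planarity and $l\ge 4$ hypotheses play no role in the argument is also accurate, since Lemma~\ref{lem:OT+Left_twist} is stated for arbitrary hyperbolic surfaces with boundary.
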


\begin{remark} \label{rem:Yilmaz}
The idea used in the proof of Lemma \ref{lem:OT+Left_twist} gives a
simple proof for Lemma 6 of \cite{Y}. Moreover, the following lemma
is given as Lemma 5 in \cite{Y}. We want to give a different proof
for it using the idea of the circle at infinity.
\end{remark}

\begin{lemma} \label{lem:Conjugate_of_RV_is_RV}
Let $S$ be a hyperbolic surface with geodesic boundary, and let
$h\in Aut(S,\partial S)$ be a right-veering diffeomorphism. Then
$h^{\prime}=\sigma h \sigma^{-1}$ is right-veering for any $\sigma
\in Aut(S,\partial S)$.
\end{lemma}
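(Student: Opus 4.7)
The plan is to use the circle-at-infinity formalism developed just before the lemma. Fix a boundary component $C$ of $S$ and a component $L$ of $\pi^{-1}(C)$ in the universal cover $\tilde S\subset D^2$. Let $\tilde h$ and $\tilde\sigma$ be the unique lifts of $h$ and $\sigma$ that restrict to the identity on $L$. Since right-veering is checked boundary-component by boundary-component, it suffices to prove that the canonical lift of $\sigma h\sigma^{-1}$ fixing $L$, call it $\widetilde{\sigma h \sigma^{-1}}$, satisfies $\mathrm{id}\ge(\sigma h\sigma^{-1})_\infty$ on $L_\infty$.

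First I would identify that canonical lift. Because $\tilde\sigma|_L=\mathrm{id}_L$, also $\tilde\sigma^{-1}|_L=\mathrm{id}_L$, and $\tilde h|_L=\mathrm{id}_L$, so the composition $\tilde\sigma\,\tilde h\,\tilde\sigma^{-1}$ is a lift of $\sigma h\sigma^{-1}$ whose restriction to $L$ is the identity; by uniqueness it must equal $\widetilde{\sigma h\sigma^{-1}}$. Hence
\[
(\sigma h\sigma^{-1})_\infty \;=\; \tilde\sigma_\infty\circ \tilde h_\infty\circ \tilde\sigma_\infty^{-1}
\]
as homeomorphisms of $L_\infty$ (noting that $\tilde\sigma$ extends to the closure of $\tilde S$ in $D^2$ and preserves $\partial\tilde S=L\cup L_\infty$ setwise, and being the identity on $L$ it must send $L_\infty$ to itself).

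Next I would observe that $\tilde\sigma_\infty:L_\infty\to L_\infty$ is order-preserving: $\sigma$ is orientation-preserving on $S$, so $\tilde\sigma$ is orientation-preserving on $\tilde S$, hence preserves the induced orientation of $\partial\tilde S$, and in particular the linear order on $L_\infty$. Then the conclusion is a one-line computation: given $z\in L_\infty$, set $w=\tilde\sigma_\infty^{-1}(z)$; the right-veering hypothesis on $h$ gives $\tilde h_\infty(w)\le w$, and applying the order-preserving map $\tilde\sigma_\infty$ yields
\[
(\sigma h\sigma^{-1})_\infty(z)\;=\;\tilde\sigma_\infty(\tilde h_\infty(w))\;\le\;\tilde\sigma_\infty(w)\;=\;z.
\]
Since $C$ was arbitrary, $\sigma h\sigma^{-1}$ is right-veering with respect to every boundary component, hence right-veering.

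The only subtle point, and the place I would be most careful, is the identification of the canonical lift in the first step: one must verify both that $\tilde\sigma\,\tilde h\,\tilde\sigma^{-1}$ actually lifts $\sigma h\sigma^{-1}$ (straightforward from $\pi\circ\tilde\sigma=\sigma\circ\pi$, etc.) and that the lift fixing $L$ is unique, so that this composition is the right object against which to test the circle-at-infinity inequality. Everything after that is formal.
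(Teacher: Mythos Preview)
Your argument is correct and follows essentially the same circle-at-infinity approach as the paper's proof: identify $(\sigma h\sigma^{-1})_\infty=\sigma_\infty\circ h_\infty\circ\sigma_\infty^{-1}$ and then use $h_\infty\le\mathrm{id}$ together with monotonicity of $\sigma_\infty$. Your version is in fact slightly cleaner, since you treat a general $\sigma$ directly and make the order-preserving property of $\sigma_\infty$ explicit, whereas the paper first reduces to the case of a single Dehn twist and relies on a figure for the final inequality.
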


\begin{proof}  Clearly, it is enough to consider the case when $\sigma$
is a single Dehn twist. First, assume that $\sigma$ is a positive
Dehn twist. We need to show that $h^{\prime}$ is right-veering with
respect to any boundary component of $S$. We will use the notations
introduced in the previous paragraph. So fix the boundary component
$C$, and an identification of $L_{\infty}$ with $\mathbb{R}$ as
above. Let $\alpha$ be any properly embedded curve in $S$ starting
at a point $\alpha(0) \in C$. Consider the lift $\tilde{\alpha}$ and
induced homeomorphisms $h^{\prime}_\infty, h_\infty, \sigma_\infty,
\sigma^{-1}_\infty :L_\infty \to L_\infty$. From their definitions
we have
$$h^{\prime}_\infty(\tilde{\alpha}(1))=\tilde{h^{\prime}}(\tilde{\alpha}(1))
=\widetilde{\sigma h \sigma^{-1}}(\tilde{\alpha}(1))=\tilde{\sigma}
\tilde{h} \tilde{\sigma^{-1}}(\tilde{\alpha}(1))=\sigma_\infty
h_\infty \sigma^{-1}_\infty (\tilde{\alpha}(1))$$ Suppose that
$\sigma^{-1}_\infty (\tilde{\alpha}(1))=a \in L_\infty$ and
$h_\infty(a)=b \in L_\infty$. Then since
$$\sigma_\infty(b)=({(\sigma^{-1})}^{-1})_\infty(b)={(\sigma^{-1}_\infty)}^{-1}(b),$$
$b$ must be mapped (by $\sigma_\infty$) to a point in $L_\infty$
which is to the right of $\tilde{\alpha}(1)$ as we illustrated in
Figure \ref{ConjugateIsRV}.

\begin{figure}[ht]
   \includegraphics{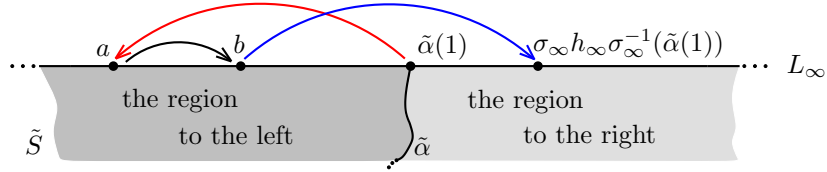}
   \caption{The point $\tilde{\alpha}(1) \in L_\infty \approx \mathbb{R}$, and how it is mapped to the right of itself.}
  \label{ConjugateIsRV}
\end{figure}

Equivalently, $\tilde{\alpha}(1) \geq \sigma_\infty h_\infty
\sigma^{-1}_\infty(\tilde{\alpha}(1))=h^{\prime}_\infty(\tilde{\alpha}(1))$
implying that $h^{\prime}$ is right-veering with respect to $C$. The
proof of the case when $\sigma$ is a negative Dehn twist uses exactly
the same argument, so we omit it.
\end{proof}

%
%
\section{Four$-$Punctured Sphere and the Proofs of Main Theorems} \label{sec:4-Holed}

For simplicity, we will denote the Dehn twist along any simple
closed curve by the same letter we use for that curve.
\begin{definition}
A representative of an element $\phi \in$ \au is said to be in \textbf{reduced form} if $s$ is the smallest integer such that $\phi$ can be written as
$$\phi=a^{r_1}b^{r_2}c^{r_3}d^{r_4}e^{m_1}f^{n_1}e^{m_2}f^{n_2}\cdots  e^{m_{s-1}}f^{n_{s-1}}e^{m_s}f^{n_s}$$
where $r_k, m_i, n_i$ are all integer for $1\leq k \leq 4$, $1\leq i
\leq s$ with possibly $m_1$ or $n_s$ zero.
\end{definition}
\begin{lemma} \label{lem:reducing_momodromy}
Any element $\phi \in$ \au can be written in reduced form.
\end{lemma}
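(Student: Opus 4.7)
The plan is to first express $\phi$ as a word in a small generating set of Dehn twists, then exploit the centrality of the boundary-parallel twists to collect $a,b,c,d$ on the left, and finally merge consecutive powers of $e$ and $f$ to obtain an alternating product. Since we only need existence of a factorization of the prescribed form (minimality of $s$ being automatic once at least one such factorization is in hand), the argument is essentially algebraic once the right generating set is identified.

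First I would justify that $Aut(\Sigma,\partial \Sigma)$ is generated by the six Dehn twists along $a,b,c,d,e,f$. By the classical Lickorish/Humphries theorem, the mapping class group of any compact oriented surface rel boundary is generated by finitely many Dehn twists along simple closed curves, and one can check directly on the four-holed sphere that the Dehn twists along the curves $a,b,c,d,e,f,g,h$ of Figure \ref{4-holed_1} constitute such a generating set. The lantern relation on $\Sigma$, which equates the product of Dehn twists along three appropriate non-boundary-parallel curves with $D_a D_b D_c D_d$, then allows one to express $D_g$ and $D_h$ in terms of $\{D_a,D_b,D_c,D_d,D_e,D_f\}$, reducing the generating set to these six twists.

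The second ingredient is the observation that each of $D_a,D_b,D_c,D_d$ lies in the center of $Aut(\Sigma,\partial \Sigma)$. Indeed, a boundary-parallel Dehn twist can be realized by a diffeomorphism supported in an arbitrarily small collar neighborhood of the corresponding boundary component, while any element of $Aut(\Sigma,\partial \Sigma)$ admits a representative that is the identity on such a collar; the two maps thus have disjoint supports and commute. Consequently, starting from any word for $\phi$ in the six generators, we may push every occurrence of $a^{\pm 1},b^{\pm 1},c^{\pm 1},d^{\pm 1}$ to the front, producing an expression
$$\phi = a^{r_1}b^{r_2}c^{r_3}d^{r_4}\cdot w,$$
where $w$ is a word in $e^{\pm 1}$ and $f^{\pm 1}$ only.

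Finally, by merging consecutive powers of the same generator inside $w$, we rewrite $w$ in the alternating form $e^{m_1}f^{n_1}e^{m_2}f^{n_2}\cdots e^{m_s}f^{n_s}$, allowing $m_1=0$ if $w$ begins with $f$ and $n_s=0$ if it ends with $e$, and taking $s$ to be the minimum possible positive integer for which such a factorization exists. The main obstacle is Step 1: verifying that $\{a,b,c,d,e,f\}$ generates $Aut(\Sigma,\partial \Sigma)$ requires invoking both the generation of the planar mapping class group by Dehn twists and a careful application of the lantern relation to eliminate $D_g$ and $D_h$. Once that is in place, the remaining manipulations are immediate from the centrality of the boundary twists.
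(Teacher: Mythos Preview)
Your proof is correct and follows essentially the same approach as the paper's: start from the generating set $\{a,b,c,d,e,f,g,h\}$, use the lantern relation to eliminate $g$ and $h$ in favor of $a,b,c,d,e^{\pm1},f^{\pm1}$, commute the central boundary twists to the front, and then collapse consecutive like powers of $e$ and $f$. The only cosmetic difference is that the paper cites Birman's braid-group description of the planar mapping class group for the generating set, whereas you invoke Lickorish/Humphries; either reference suffices.
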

\begin{proof}
From braid group representation of full mapping class group, we know
that the mapping class group \au can be generated by Dehn twists
along the simple closed curves $a, b, c, d, e, f, g, h$ given in
Figure \ref{4-holed_1} (see \cite{Bi} for details). Therefore, any
element $\phi$ of \au can be written as a word consisting of only
$a, b, c, d, e, f, g, h$ and their inverses. Since $a, b, c, d,$ are
in the center of \aut, we can bring them to any position we want.
For the second part including $e$ and $f$, we use the well-known
\emph{lantern relation} (also known as \emph{4-holed sphere
relation}). In terms of our generators we will use two different
lantern relations. Namely, we have
\begin{center}
$gef=abcd \quad \quad$ and $\quad \quad hfe=abcd$.
\end{center}
These give $g=abcdf^{-1}e^{-1}$ and $h=abcde^{-1}f^{-1}$. Therefore,
we can exchange any power of $g$ and $h$ in the word defining $\phi$
by some products of $a, b, c, d, e^{-1}, f^{-1}$ (and $a^{-1},
b^{-1}, c^{-1}, d^{-1}, e, f$ for negative powers of $g$ and $h$).
Combining (and canceling if there is any) the powers of $e$ and $f$,
and commuting the generators $a, b, c, d$, we get the reduced form
of $\phi$ as claimed.
\end{proof}

We first prove Theorem \ref{thm:holofillable} using the lantern relations.

\begin{proof}[\textbf{Proof of Theorem \ref{thm:holofillable}}]
Let
$\phi=a^{r_1}b^{r_2}c^{r_3}d^{r_4}e^{m_1}f^{n_1}\cdots
e^{m_s}f^{n_s} \in $ \au. We will show how to obtain a monodromy for the same open book which is a product of positive Dehn twists and use Theorem \ref{Holofill=PosDehnTwist}. Using lantern relations, we can replace each $e^{-1}$ by $a^{-1}b^{-1}c^{-1}d^{-1}hf$ and each
$f^{-1}$ by $a^{-1}b^{-1}c^{-1}d^{-1}ge$. This proves (H1) and (H4). In the case where $s=1$, we can use fewer lantern relations by first doing the following: using the lantern relations, replace $e^{-1}f^{-1}$ by $a^{-1}b^{-1}c^{-1}d^{-1}h$. Moreover, if $max\{m_1,n_1\}<-1$, also use the lantern relations to replace $e^-1$ by $a^{-1}b^{-1}c^{-1}d^{-1}fg$, and use Theorem \ref{thm:Conjugates_Are_Contactomorphic} to cancel the new initial $f$ with the last $f^{-1}$.
\end{proof}

We note that the two simplifications mentioned in the case of $s=1$ can also be applied in general, but negative powers of $e$ and $f$ need not be adjacent (even after a cyclic permutation).
We also remark that changing the order of the products of $e$ and $f$ can result in not only different contact manifolds, but also topologically different manifolds. For instance, $\phi=e^2f^2$ and $\phi=efef$ are not conjugate to each other, and the underlying topological manifolds are not diffeomorphic (see Figure \ref{efef_eeff} for their surgery diagrams).

\begin{figure}[ht]
   \includegraphics[width=3in]{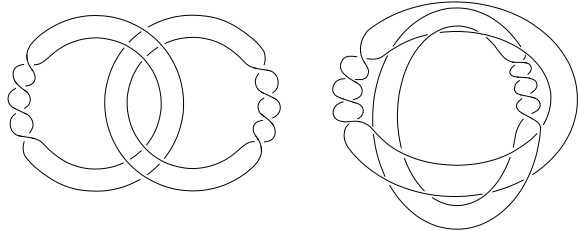}
   \caption{Diagrams for $\phi=e^2f^2$ and $\phi=efef$ (all coefficients are $-3$).}
  \label{efef_eeff}
\end{figure}

\newpage
Next, we characterize the overtwisted structures stated in the introduction.

\begin{proof}[\textbf{Proof of Theorem \ref{thm:overtwisted}}]
By using Theorem \ref{thm:Conjugates_Are_Contactomorphic}, we will prove the statements for
$\xi_{\phi^{\prime}}$ where
$\phi^{\prime}=a^{r_1}b^{r_2}c^{r_3}d^{r_4}e^mf^n$. To prove (OT1),
consider the properly embedded curves $\alpha_1, \alpha_2, \alpha_3,
\alpha_4$ starting at the boundary components $C_1, C_2, C_3, C_4$,
respectively, and their images under $\phi^{\prime}$ as given in
Figure \ref{overtwisted1}.
\begin{figure}[ht]
   \includegraphics{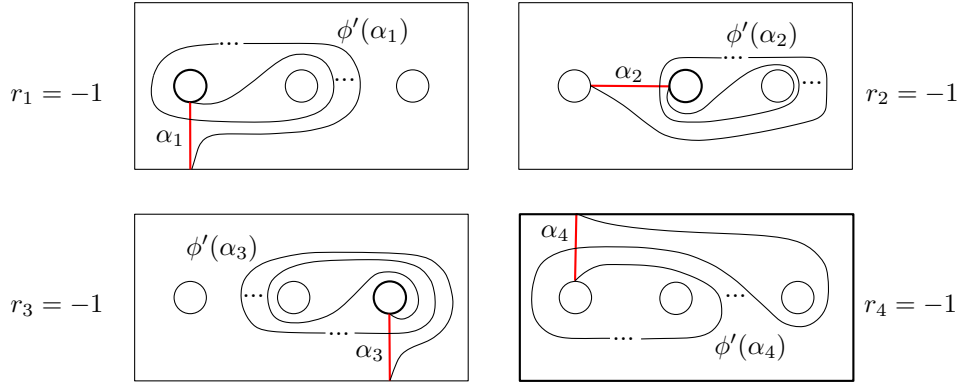}
   \caption{The curves $\alpha_k$ and their images under $\phi^{\prime}$ in $\Sigma$.}
  \label{overtwisted1}
\end{figure}
In all the pictures, we are assuming $m>0$, $n>0$, and $r_k=-1$
(otherwise the fact that $\phi^{\prime}$ is not right-veering with
respect to $C_k$ is even more obvious). We can see from the pictures
that if $r_k<0$ for some $k$, then $\phi^{\prime}(\alpha_k)$ is to
the left of $\alpha_k$ , so $\phi^{\prime}$ is not right-veering
which implies by Theorem \ref{hokama} that $\xi_{\phi^{\prime}}$ is
overtwisted. Note that in any picture in Figure \ref{overtwisted1},
we are taking all the other $r_k$'s to be zero. However, even if
$\phi^{\prime}$ has a factor of some positive power of Dehn twist
along the boundary component other than $C_k$,
$\phi^{\prime}(\alpha_k)$ is still left to the $\alpha_k$ at their
common starting point by Lemma \ref{lem:twisting_OT}. Therefore,
$\xi_{\phi'}$ is overtwisted by Corollary \ref{cor:twisting_OT}.

\begin{figure}[ht]
   \includegraphics{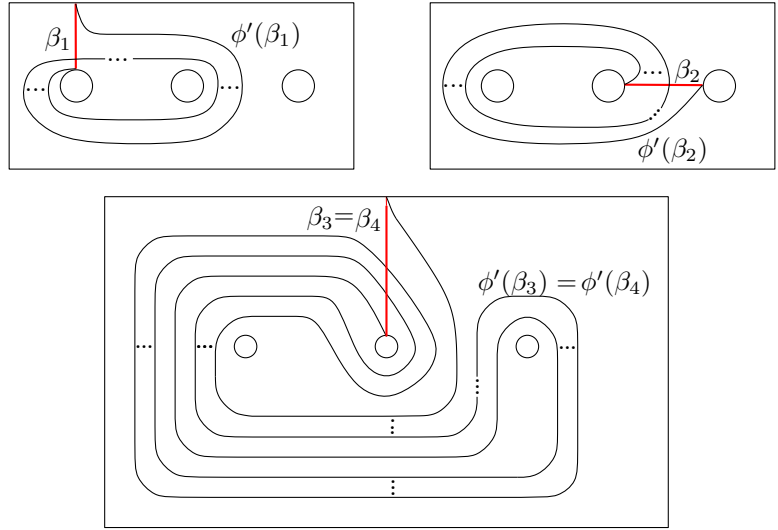}
   \caption{The curves $\beta_k$ and their images under $\phi^{\prime}$ in $\Sigma$.}
  \label{overtwisted2}
\end{figure}

To prove (OT2), consider the properly embedded curves $\beta_1,
\beta_2, \beta_3, \beta_4$ starting at the boundary components
$C_1, C_2, C_3, C_4$, respectively, and their images under
$\phi^{\prime}$ as given in
Figure \ref{overtwisted2}. In all the pictures, we are
assuming $m=-1$, $n>0$, (again otherwise the fact that
$\phi^{\prime}$ is not right-veering with respect to $C_k$ is even
more obvious). We can see from the pictures that if $r_k=0$ for
some $k$, then $\phi^{\prime}(\beta_k)$ is to the left of
$\beta_k$ , so $\phi^{\prime}$ is not right-veering which implies
again by Theorem \ref{hokama} that $\xi_{\phi^{\prime}}$ is
overtwisted. Again, in all the pictures, we consider all the other
$r_k$'s to be zero, and if $\phi^{\prime}$ has a factor of some
positive power of Dehn twist along the boundary component other
than $C_k$, $\phi^{\prime}(\beta_k)$ is still left to the
$\beta_k$ at their common starting point by Lemma
\ref{lem:twisting_OT}. Therefore, $\xi_{\phi'}$ is overtwisted by
Corollary \ref{cor:twisting_OT}.

\begin{figure}[ht]
   \includegraphics{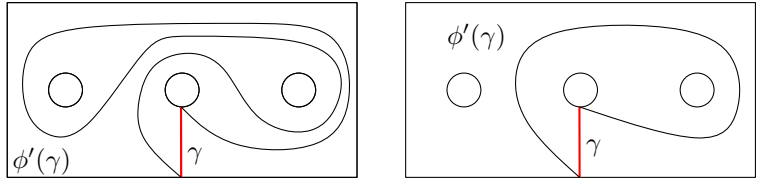}
   \caption{The curve $\gamma$ and its images under two possible $\phi^{\prime}$ in $\Sigma$.}
  \label{overtwisted3}
\end{figure}

To prove (OT3), consider the curve $\gamma$ running from $C_2$ to
$C_4$ as in Figure \ref{overtwisted3}. In the left picture each
$r_k=1, m=-2, n=-1$, and in the right one each $r_k=1, m=-1,
n=-2$. Clearly, the image $\phi'(\gamma)$ is to left of $\gamma$
at both their common endpoints on $C_2$ and $C_4$. Therefore,
$\xi_{\phi'}$ ($\phi'=abcde^{-2}f^{-1}$ or $abcde^{-1}f^{-2}$) is
overtwisted. In both cases, if we take $r_1, r_3$ and only one of
$r_2$ and $r_4$ to be any positive integer, $\xi_{\phi'}$ is still
overtwisted by Lemma \ref{lem:twisting_OT} and Corollary
\ref{cor:twisting_OT}. Moreover, if we also take $m\leq -3, n\leq
-3$ in both cases, $\xi_{\phi'}$ is still overtwisted by Lemma
\ref{lem:OT+Left_twist} and Corollary \ref{cor:OT+Left_twist}.

The proof of (OT4) is similar to that of (OT3), so we will omit
it.

\end{proof}

%
%

The main trick we have used through out the paper is
that we proved most of the the statements for
$\phi^{\prime}=a^{r_1}b^{r_2}c^{r_3}d^{r_4}e^mf^n$ and then applied
Theorem \ref{thm:Conjugates_Are_Contactomorphic}. For the cases
which we could not decide whether $\xi_{\phi^{\prime}}$ is overtwisted or not,
it is still good to know if
$\phi^{\prime}$ is right-veering. We give some partial answers to
this in the next theorem where we do not list some obvious (or
already proven) cases.

\begin{theorem} \label{thm:Phi_Right_Veering}
$\phi^{\prime}=a^{r_1}b^{r_2}c^{r_3}d^{r_4}e^mf^n \in$ \au is
right-veering in the following cases:
\begin{enumerate}
\item[(R1)] $min\{r_k\}=1$, $mn=0$, and $max\{m,n\}=0$,
\item[(R2)] $min\{r_k\}=1$, and $mn < 0$.
\end{enumerate}
\end{theorem}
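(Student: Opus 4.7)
The overall strategy is to push $\phi'$ into $Dehn^+(\Sigma,\partial\Sigma) \subset Veer(\Sigma,\partial\Sigma)$ whenever possible by exploiting the fact that $\min\{r_k\} = 1$ makes exactly one copy of $abcd$ available to be absorbed by a lantern relation. When the reduction leaves a genuine negative interior twist, I fall back on Lemma \ref{lem:subsurface} applied boundary component by boundary component, using the centrality of $a,b,c,d$ in $Aut(\Sigma,\partial\Sigma)$ (which lets me move these factors freely) and Lemma \ref{lem:Conjugate_of_RV_is_RV} (which allows an auxiliary conjugation without losing right-veering).

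For case (R1), the rotation symmetry between $e$ and $f$ lets me assume $n = 0$ and $m \le 0$, so $\phi' = a^{r_1}b^{r_2}c^{r_3}d^{r_4}e^m$. If $m = 0$ then $\phi' \in Dehn^+$ directly. If $m \le -1$, I use $hfe = abcd$ to write $abcd \cdot e^{-1} = hf$; combined with centrality this gives
\[
\phi' = a^{r_1-1}b^{r_2-1}c^{r_3-1}d^{r_4-1} \cdot hf \cdot e^{m+1},
\]
which lies in $Dehn^+$ as soon as $m = -1$. For $m \le -2$ the tail $e^{m+1}$ remains negative, and I establish right-veering with respect to each $C_k$ by choosing a pair of pants $S' \subset \Sigma$ with $C_k$ as one boundary and such that the rest of the monodromy (after pulling out the boundary-parallel factor) restricts to the identity on $S'$; this reduces the question to an instance of Lemma \ref{lem:subsurface}.

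For case (R2), take $m > 0$, $n < 0$. The other lantern $gef = abcd$ gives $abcd \cdot f^{-1} = ge$; moving $abcd$ past $e^m$ using centrality yields
\[
\phi' = a^{r_1-1}b^{r_2-1}c^{r_3-1}d^{r_4-1} \cdot e^m \cdot ge \cdot f^{n+1},
\]
which is in $Dehn^+$ when $n = -1$. For $n \le -2$ the residual factor $f^{n+1}$ has a negative exponent, and I again appeal to Lemma \ref{lem:subsurface} applied componentwise. The symmetric cases are handled in the same way after swapping the roles of $e$ and $f$ and using the other lantern.

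The main obstacle I anticipate is in the sub-cases $m \le -2$ of (R1) and $n \le -2$ of (R2). There only one $abcd$ can be absorbed by a single lantern reduction, so a genuine negative interior Dehn twist survives, and the inclusion into $Dehn^+$ is no longer directly available. The delicate point is that for the boundary components $C_k$ with $r_k = 1$, after the lantern step one has effectively no surplus of the boundary twist along $C_k$ to extract, so applying Lemma \ref{lem:subsurface} requires either finding a pair of pants on which the relevant remainder is identity — using the lantern-introduced curves $h$ or $g$ to avoid the support of the surviving negative twist — or, failing that, performing a direct geometric arc analysis at $C_k$ in the spirit of the proof of Theorem \ref{thm:overtwisted}, to verify that the image arc lies to the right.
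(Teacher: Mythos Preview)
Your lantern-first strategy is exactly what manufactures the obstacle you flag at the end, and the paper avoids it entirely. For (R1) with $m<0$, $n=0$, the paper applies Lemma~\ref{lem:subsurface} directly to the \emph{original} word $\phi'=a^{r_1}b^{r_2}c^{r_3}d^{r_4}e^{m}$: take $S'$ to be the pair of pants with $\partial S'=C_1\cup C_2\cup e$; then $\gamma=c^{r_3}d^{r_4}e^{m}$ is the identity on $S'$ (the supports of $c,d$ lie in the complementary pair of pants, and $e$ can be pushed just off $S'$), so $D_a^{r_1}\cdot\gamma$ is right-veering with respect to $C_1$ and $D_b^{r_2}\cdot\gamma$ with respect to $C_2$; the leftover positive boundary twist in each case preserves right-veering by the monoid property. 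The other pair of pants bounded by $e$ handles $C_3$ and $C_4$ with $\gamma=a^{r_1}b^{r_2}e^{m}$. No case split on $m=-1$ versus $m\le -2$ is needed and no lantern is used. By spending one copy of $abcd$ on a lantern before invoking the lemma, you erase precisely the boundary twist you need at any $C_k$ with $r_k=1$; you are then forced to seek an $S'$ on which the remainder $b^{r_2-1}c^{r_3-1}d^{r_4-1}hfe^{m+1}$ is the identity, but $h,f,e$ realize the three distinct separations of $\{C_1,\dots,C_4\}$ and hence pairwise intersect, so no pair of pants in $\Sigma$ lies outside all their supports. Your proposed fix therefore does not go through, and the fallback ``direct arc analysis'' is left as a promise rather than an argument.

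For (R2) the paper gives a one-line reduction that bypasses your second lantern computation altogether: with $m<0$, $n>0$ write $\phi'=\tilde\phi\cdot f^{n}$ where $\tilde\phi=a^{r_1}b^{r_2}c^{r_3}d^{r_4}e^{m}$ is right-veering by (R1) and $f^{n}\in Dehn^{+}(\Sigma,\partial\Sigma)\subset Veer(\Sigma,\partial\Sigma)$; the submonoid property finishes it. The case $m>0$, $n<0$ is handled symmetrically after commuting $e^{m}$ past the central boundary twists. Neither the lantern relation nor Lemma~\ref{lem:Conjugate_of_RV_is_RV} is needed anywhere in the proof.
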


\begin{proof}
For (R1), assume $m<0$ and $n=0$. The fact that $\phi'$ is
right-veering is an implication of Lemma \ref{lem:subsurface} as
follows: To show $\phi'$ is right-veering with respect to $C_1$
and $C_2$, take $S'$ (in the lemma) to be the subsurface of
$\Sigma$ such that $\partial S'=C_1 \cup C_2 \cup e$ and take
$\gamma$ (in the lemma) as $\gamma=c^{r_3}d^{r_4}e^m$. To show
$\phi'$ is right-veering with respect to $C_3$ and $C_4$, take
$S'$ to be the subsurface of $\Sigma$ such that $\partial S'=C_3
\cup C_4 \cup e$ and take $\gamma$ as $\gamma=a^{r_1}b^{r_2}e^m$.

For (R2), assume $m<0$ and $n>0$. We know, by (R1), that
$\tilde{\phi}=a^{r_1}b^{r_2}c^{r_3}d^{r_4}e^m$ ($r_k \geq 1$ for
all $k$, $m < 0$) is right-veering. Since $Dehn^+(\Sigma, \partial
\Sigma) \subset Veer(\Sigma, \partial \Sigma)$, we conclude that
$\phi'=\tilde{\phi} \cdot f^n$ ($n>0$) is also right-veering.
\end{proof}


\end{document}